\newtheorem{assumption}{Assumption}
\newcommand{\R}{\mathbb{R}}
\newcommand{\X}{\mathcal{X}}
\newcommand{\dd}{\mathrm{d}}
\newtheorem{algorithm}{Algorithm}
\begin{document}
\title{Maximum likelihood estimation for the $\lambda$-exponential family}
%
%\titlerunning{Abbreviated paper title}
% If the paper title is too long for the running head, you can set
% an abbreviated paper title here
%
\author{Xiwei Tian\inst{1} \and
Ting-Kam Leonard Wong\inst{1}\orcidID{0000-0001-5254-7305} \and
Jiaowen Yang\inst{2} \and 
Jun Zhang\inst{3}}
\authorrunning{Tian et al.}
% First names are abbreviated in the running head.
% If there are more than two authors, 'et al.' is used.
%
\institute{University of Toronto, Toronto, Ontario, Canada\\
\email{xiwei.tian@mail.utoronto.ca, tkl.wong@utoronto.ca}\and
Meta, Menlo Park, California, United States\\
\email{jiaowen@meta.com}
\and
University of Michigan, Ann Arbor, Michigan, United States\\
\email{junz@umich.edu}}
\maketitle              % typeset the header of the contribution
\begin{abstract}
The $\lambda$-exponential family generalizes the standard exponential family via a generalized convex duality motivated by optimal transport. It is the constant-curvature analogue of the exponential family from the information-geometric point of view, but the development of computational methodologies is still in an early stage. In this paper, we propose a fixed point iteration for maximum likelihood estimation under i.i.d.~sampling, and prove using the duality that the likelihood is monotone along the iterations. We illustrate the algorithm with the $q$-Gaussian distribution and the Dirichlet perturbation.

\keywords{$\lambda$-exponential family \and $\lambda$-duality \and maximum likelihood estimation \and $q$-Gaussian distribution \and Dirichlet perturbation.}
\end{abstract}
\section{Introduction} \label{sec:intro}
Let a state space $\X$ and a reference measure $\nu$ on $\X$ be given. For example, we may let $\X$ be a Euclidean space and $\nu$ be the Lebesgue measure.

\begin{definition}[$\lambda$-exponential family]
Let $\lambda \in \R \setminus \{0\}$ be a fixed constant. A $d$-dimensional $\lambda$-exponential family dominated by $\nu$ is a parameterized probability density $p(x; \theta)$, $\theta \in \Theta \subset \R^d$, given by
\begin{equation} \label{eqn:lambda.exponential.family}
p(x; \theta) =  (1 + \lambda \theta \cdot F(x))_{+}^{\frac{1}{\lambda}} e^{-\varphi(\theta)}, \quad x \in \X,
\end{equation}
where $z_+ = \max\{z, 0\}$ is the positive part, $F = (F_1, \ldots, F_d): \X \rightarrow \R^d$ is a vector of statistics, and the primal potential function $\varphi(\theta)$ is defined by the normalization $\int_{\X} p(x; \theta) \dd \nu(x) = 1$.
\end{definition}

The $\lambda$-exponential family (defined by a {\it divisive normalization}) was introduced in \cite{WZ22} (also see \cite{W18}) as a natural extension of the standard {\it exponential family}
\begin{equation} \label{eqn:exp.family}
p(x; \theta) = e^{\theta \cdot F(x) - \phi(\theta)},
\end{equation}
which is recovered in \eqref{eqn:lambda.exponential.family} by letting $\lambda \rightarrow 0$. It also recovers, by switching to a {\it subtractive normalization} (see \cite[Section III.A]{WZ22}), the {\it $q$-exponential family} with $q = 1 - \lambda$ \cite{AO11,AOM12,N11}. The {\it $q$-Gaussian distribution} (see Example \ref{eg:q.Gaussian}) and the {\it Dirichlet perturbation} (see Section \ref{sec:examples}) are important examples of the $\lambda$-exponential family. The key idea is a generalized convex duality, called the {\it $\lambda$-duality}, which is a special case of the $c$-duality in optimal transport \cite{V03}. Whereas the cumulant generating function $\phi$ in \eqref{eqn:exp.family} is convex, when $\lambda < 1$ the function $\varphi$ in \eqref{eqn:lambda.exponential.family} can be shown (under suitable regularity conditions) to have the property that $\frac{1}{\lambda} (e^{\lambda \varphi} - 1)$ is convex. This generalized convexity is captured by the {\it $\lambda$-conjugate} 
\begin{equation} \label{eqn:lambda.conjugate}
f^{(\lambda)}(y) := \sup_x \left\{ \frac{1}{\lambda} \log (1 + \lambda x \cdot y) - f(x) \right\},
\end{equation}
in which the pairing function $ \frac{1}{\lambda} \log (1 + \lambda x \cdot y)$ is used in place of the usual inner product $x \cdot y$. Using the $\lambda$-duality based on \eqref{eqn:lambda.conjugate}, many information-geometric properties of the exponential family (obtained by applying the usual convex duality to the $\phi$ in \eqref{eqn:exp.family}) have natural analogues. In particular, while an exponential family is dually flat \cite{A16}, a $\lambda$-exponential family is dually projectively flat with constant sectional curvature $\lambda$. Further results about the $\lambda$-duality and its relations with the usual convex duality can be found in \cite{ZW21,ZW22}.

%which replaces the usual convex conjugate by the $\lambda$-conjugate $f^{(\lambda)}(y) = \sup_{x} \left\{ \frac{1}{\lambda} \log (1 + \lambda x \cdot y) - f(x) \right\}$. A function is said to be $c_{\lambda}$-convex if it is the $\lambda$-conjugate of some function. When $\lambda < 1$, it can be shown under suitable regularity conditions (see \cite[Section III]{WZ22}) that the potential function $\varphi_{\lambda}$ is $c_{\lambda}$-convex, and its $\lambda$-conjugate
%\begin{equation} \label{eqn:c.lambda.psi}
%\psi_{\lambda}(\eta) = \sup_{\theta \in \Theta} \left\{ \frac{1}{\lambda} \log (1 + \lambda \theta \cdot \eta) - \varphi_{\lambda}(\theta) \right\}.\
%\end{equation}
%is a negative R\'{e}nyi entropy of order $q = 1 -\lambda$. Many information-geometric properties of the exponential family (obtained by applying the usual convex duality to $\varphi$ in \eqref{eqn:exp.family}
%) have natural analogues under the $\lambda$-duality. In particular, while an exponential family is dually flat \cite{A16}, a $\lambda$-exponential family is dually projectively with constant sectional curvature $\lambda$. Further results about the $\lambda$-duality and its relations with the usual convex duality can be found in \cite{ZW21,ZW22}.

Several computational methodologies have been developed for the generalized exponential family (or its special cases such as the $t$-distribution) represented as \eqref{eqn:lambda.exponential.family} or as a $q$-exponential family: logistic regression \cite{DV11}, approximate inference \cite{DQV11}, classification \cite{FSS17}, nonlinear principal component analysis \cite{TW21}, dimension reduction \cite{ANK24}, online estimation \cite{KWR24}, maximum likelihood estimation and variational inference \cite{GCE24}, as well as compositional data analysis \cite{MCG25}. In this paper, we study maximum likelihood estimation of the $\lambda$-exponential family under i.i.d.~sampling, emphasizing the role played by the $\lambda$-duality. 

For a (regular) exponential family \eqref{eqn:exp.family}, it is well known (see \cite[Section 2.8.3]{A16}) that the maximum likelihood estimate (MLE) $\hat{\theta}$ given data points $x_1, \ldots, x_n$ (under independent sampling) is characterized by
\begin{equation} \label{eqn:exp.family.MLE}
\nabla \phi(\hat{\theta}) = \frac{1}{n} \sum_{i = 1}^n F(x_i).
\end{equation}
That is, the MLE $\hat{\eta}$ of the dual (expectation) parameter $\eta = \nabla \phi(\theta)$ is given simply by the sample mean of the sufficient statistic. For a $\lambda$-exponential family, the appropriate dual parameter is defined by the {\it $\lambda$-gradient} 
\begin{equation} \label{eqn:c.lambda.gradient}
\eta= \nabla^{(\lambda)} \varphi(\theta) := \frac{\nabla \varphi(\theta)}{1 - \lambda \nabla \varphi(\theta) \cdot \theta},
\end{equation}
which can be interpreted probabilistically as an escort expectation. The inverse mapping is given by $\theta = \nabla^{(\lambda)} \psi(\eta)$, where $\psi = \varphi^{(\lambda)}$ is the dual potential given by \eqref{eqn:c.lambda.psi} below and can be expressed as a R\'{e}nyi entropy \cite[Theorem III.14]{WZ22}. In Section \ref{sec:prelim}, we show that the MLE $\hat{\eta}$ for $\eta$ is a {\it convex combination} of $F(x_1), \ldots, F(x_n)$, where the weights depend on $\hat{\eta}$ and the data. This relation suggests a natural fixed-point iteration (see Algorithm \ref{alg:fixed.point}) to compute the MLE; it is different from the algorithm proposed recently in \cite{GCE24} which is based on optimizing a bound of the likelihood. In Theorem \ref{thm:main}, we show that when $\lambda < 0$ the likelihood is monotone along the iterations of our algorithm. In Section \ref{sec:examples}, we illustrate this algorithm with the Dirichlet perturbation model. Further investigation of this algorithm, as well as comparison with alternative approaches, will be addressed in a follow-up paper.

%We present several examples to illustrate the performance of our algorithm and compare it with alternative approaches.

 %Using the $c_\lambda$-duality, the $\lambda$-exponential family can be reparametrized by $\eta = \nabla^{c_\lambda} \varphi(\theta).$ This reparametrization is given by $$p(x; \theta) = \exp\Big\{\frac{1}{\lambda}\log(\Phi(\eta)+\nabla \Phi(\eta)\cdot(x-\eta))\Big\},$$ where $\Phi = e^{\lambda \varphi^{c_\lambda}}$ and $\psi$ is the $c_\lambda$-conjugate of $\varphi$. One could see \cite{WZ22} section C for more details. 

\section{MLE via fixed-point iteration} \label{sec:prelim}
Let a $\lambda$-exponential family \eqref{eqn:lambda.exponential.family} be given. Throughout this paper, we assume that the family satisfies the following regularity conditions. It can be verified that these conditions hold for the examples considered in this paper.

\begin{assumption} \label{ass:assumption}
We assume that $\lambda < 0$, $\Theta = \{ \theta \in \R^d:  \int (1 + \lambda \theta \cdot F(x))_+^{1/\lambda} \dd \nu(x) < \infty\}$ is the natural parameter set and  
\cite[Condition III.10]{WZ22} holds. Moreover, we assume that the dual parameter set $\Xi := \nabla^{(\lambda)} \varphi(\Theta)$ is convex and contains the common support $\mathcal{S} := \{x \in \X : p(x; \theta) > 0\}$ of the family.
\end{assumption}

Let $\varphi$ be the potential function in \eqref{eqn:lambda.exponential.family}. We define the {\it dual potential} $\psi$ by the $\lambda$-conjugate \eqref{eqn:lambda.conjugate} of $\varphi$:
\begin{equation} \label{eqn:c.lambda.psi}
\psi(\eta) := \varphi^{(\lambda)}(\eta) = \sup_{\theta' \in \Theta} \left\{ \frac{1}{\lambda} \log (1 + \lambda \theta' \cdot \eta) - \varphi(\theta') \right\}, \quad \eta \in \Xi.
\end{equation}
Under Assumption \ref{ass:assumption}, $\frac{1}{\lambda} (e^{\lambda \psi} - 1)$ is strictly convex on $\Xi$. Moreover, $\nabla^{(\lambda)} \varphi : \Theta \rightarrow \Xi$ is a diffeomorphism with inverse $\nabla^{(\lambda)} \psi$. We define the {\it dual parameter} $\eta$ by $\eta := \nabla^{(\lambda)} \varphi(\theta)$, so that $\theta = \nabla^{(\lambda)} \psi(\eta)$. From the definition of $\lambda$-conjugate we have the following analogue of the Fenchel-Young identity:
\begin{equation} \label{eqn:Fenchel.Young}
\varphi_{\lambda}(\theta) + \psi_{\lambda}(\eta) \equiv \frac{1}{\lambda} \log (1 + \lambda \theta \cdot \eta), \quad \eta = \nabla^{(\lambda)}\varphi(\theta),
\end{equation}
where $1 + \lambda \theta \cdot \eta$ can be shown to be strictly positive. The convexity of $\Xi$ is related to the $\lambda$-analogue of convex functions of Legendre type; see \cite[Section VI]{WZ22} for further discussion. %It can be verified that Assumption \ref{ass:assumption} holds for all examples considered in this paper.

Let $x_1, \ldots, x_n \in \X$ be $n$ i.i.d.~samples from the model \eqref{eqn:lambda.exponential.family} and write $y_i = F(x_i) \in \mathbb{R}^d$. We assume $x_i \in \mathcal{S}$ for all $i$ as otherwise the likelihood is zero. The log likelihood is then given by
\begin{equation} \label{eqn:log.likelihood}
\ell(\theta) = \sum_{i = 1}^n \log p(x_i; \theta) = \sum_{i = 1}^n \left(  \frac{1}{\lambda} \log (1 + \lambda \theta \cdot y_i) -  \varphi(\theta) \right).
\end{equation}

\begin{proposition} [First order condition] \label{prop:MLE}
Suppose Assumption \ref{ass:assumption} holds. If $\hat{\theta}$ is an MLE, i.e.~it maximizes $\ell(\theta)$, then $\hat{\eta} := \nabla^{(\lambda)} \varphi_{\lambda}(\hat{\theta})$ satisfies the relation
\begin{equation} \label{eqn:first.order.condition}
\hat{\eta} = \sum_{i = 1}^n w_i(\hat{\theta}) y_i,
\end{equation}
where the weights $w_i$ are defined by 
\begin{equation} \label{eqn:weight}
w_i(\theta) := \frac{1/(1 + \lambda \theta \cdot y_i)}{\sum_{j = 1}^n 1/(1 + \lambda \theta \cdot y_j)}, \quad i = 1, \ldots, n.
\end{equation}
\end{proposition}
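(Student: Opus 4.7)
The plan is to differentiate the log-likelihood $\ell$ in \eqref{eqn:log.likelihood} with respect to $\theta$, set the gradient to zero at $\hat\theta$, and then algebraically massage the first-order condition into the definition of the $\lambda$-gradient in \eqref{eqn:c.lambda.gradient}. Since $\hat\theta$ is interior to $\Theta$ by Assumption \ref{ass:assumption} and $1 + \lambda\hat\theta\cdot y_i > 0$ for each sample (as $x_i \in \mathcal{S}$), the function $\ell$ is smooth at $\hat\theta$ and the unconstrained first-order condition $\nabla \ell(\hat\theta) = 0$ applies.

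Differentiating \eqref{eqn:log.likelihood} term-by-term yields
\begin{equation*}
\nabla \ell(\hat\theta) \;=\; \sum_{i=1}^n \frac{y_i}{1 + \lambda\hat\theta\cdot y_i} \;-\; n\,\nabla\varphi(\hat\theta) \;=\; 0,
\end{equation*}
so that $\nabla\varphi(\hat\theta) = \frac{1}{n}\sum_{i=1}^n \frac{y_i}{1 + \lambda\hat\theta\cdot y_i}$. The next step is to evaluate the scalar $1 - \lambda\,\hat\theta\cdot\nabla\varphi(\hat\theta)$ appearing in the denominator of \eqref{eqn:c.lambda.gradient}. Using the elementary identity $\frac{\lambda\,\hat\theta\cdot y_i}{1 + \lambda\hat\theta\cdot y_i} = 1 - \frac{1}{1 + \lambda\hat\theta\cdot y_i}$, I would compute
\begin{equation*}
\lambda\,\hat\theta\cdot\nabla\varphi(\hat\theta) \;=\; \frac{1}{n}\sum_{i=1}^n \frac{\lambda\hat\theta\cdot y_i}{1 + \lambda\hat\theta\cdot y_i} \;=\; 1 - \frac{1}{n}\sum_{i=1}^n \frac{1}{1 + \lambda\hat\theta\cdot y_i},
\end{equation*}
hence $1 - \lambda\,\hat\theta\cdot\nabla\varphi(\hat\theta) = \frac{1}{n}\sum_{i=1}^n \frac{1}{1 + \lambda\hat\theta\cdot y_i}$.

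Substituting these two expressions into \eqref{eqn:c.lambda.gradient} gives
\begin{equation*}
\hat\eta \;=\; \nabla^{(\lambda)}\varphi(\hat\theta) \;=\; \frac{\sum_{i=1}^n y_i/(1+\lambda\hat\theta\cdot y_i)}{\sum_{j=1}^n 1/(1+\lambda\hat\theta\cdot y_j)} \;=\; \sum_{i=1}^n w_i(\hat\theta)\, y_i,
\end{equation*}
which is \eqref{eqn:first.order.condition}. There is no real obstacle here beyond the bookkeeping; the only points requiring care are the justification that $\nabla\varphi$ can be passed through the normalization integral (guaranteed by \cite[Condition III.10]{WZ22} invoked in Assumption \ref{ass:assumption}) and the positivity of each $1 + \lambda\hat\theta\cdot y_i$, which follows from $x_i$ lying in the common support $\mathcal S$. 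The conceptual content is that the $\lambda$-gradient converts the additive first-order condition $n\nabla\varphi(\hat\theta) = \sum_i y_i/(1+\lambda\hat\theta\cdot y_i)$ into a convex combination, with the denominator of \eqref{eqn:c.lambda.gradient} providing exactly the normalization needed.
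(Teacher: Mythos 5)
Your proposal is correct and follows essentially the same route as the paper: differentiate the log-likelihood to get $\nabla\varphi(\hat\theta) = \frac{1}{n}\sum_i y_i/(1+\lambda\hat\theta\cdot y_i)$, observe that $1 - \lambda\,\hat\theta\cdot\nabla\varphi(\hat\theta) = \frac{1}{n}\sum_i 1/(1+\lambda\hat\theta\cdot y_i)$, and substitute into the definition of the $\lambda$-gradient. You simply spell out the elementary identity behind the normalization step and the regularity justifications that the paper leaves implicit.
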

\begin{proof}
%    By the product rule of derivatives, we obtain the derivative of the joint log-likelhood $$\nabla_\theta \log(\prod_{i=1}^n p(x_i; \theta)) = (\prod_{i=1}^n p(x_i; \theta))^{-1}\sum_{i=1}^n \nabla_\theta p(x_i; \theta)\prod_{k \neq i} p(x_k; \theta).$$ Thus, the first order condition of MLE is equivalent to $$\sum_{i=1}^n \nabla_\theta p(x_i; \theta)\prod_{k \neq i} p(x_k; \theta) = 0.$$ Observe that \begin{align*}
%    \sum_{i=1}^n \nabla_\theta p(x_i; \theta)\prod_{k \neq i} p(x_k; \theta)
%    &= -e^{-n\varphi(\theta)}n \nabla_\theta \varphi(\theta)
%    \prod_{i=1}^n \left( 1 + \lambda F(x_i) \cdot \theta \right)^{\frac{1}{\lambda}} \\
%    & \quad + \frac{1}{\lambda} \left( \prod_{i=1}^n \left( 1 + \lambda F(x_i) \cdot \theta \right) \right)^{\frac{1}{\lambda} - 1}
%    e^{-n\varphi(\theta)} \\
%    & \quad \sum_{i=1}^n \lambda F(x_i) \prod_{k \neq i} \left( 1 + \lambda F(x_k) \cdot \theta \right) \\
%    &= e^{-n\varphi(\theta)}
%    \left( \prod_{i=1}^n \left( 1 + \lambda F(x_i) \cdot \theta \right) \right)^{\frac{1}{\lambda}} \\
%    & \quad \times \left( -n \nabla \varphi(\theta) + 
 %   \sum_{i=1}^n \frac{F(x_i)}{1 + \lambda F(x_i) \cdot \theta} \right)
%\end{align*}
Differentiating \eqref{eqn:log.likelihood} yields the first order condition $\nabla \varphi(\hat{\theta}) = \frac{1}{n}\sum_{i=1}^n \frac{y_i}{1+\lambda \hat{\theta} \cdot y_i}$. Note that
\begin{equation} \label{eqn:weights.normalization}
1 - \lambda \nabla \varphi(\hat{\theta}) \cdot \hat{\theta} = \frac{1}{n} \sum_{i = 1}^n \frac{1}{1 + \lambda \hat{\theta} \cdot y_i}.
\end{equation}
We obtain \eqref{eqn:first.order.condition} from the definition \eqref{eqn:c.lambda.gradient} of the dual parameter.
\end{proof}

When $\lambda \rightarrow 0$, \eqref{eqn:first.order.condition} reduces to \eqref{eqn:exp.family.MLE}. In general, when $\lambda \neq 0$  equation \eqref{eqn:first.order.condition} does not have a closed form solution. However, it suggests the following procedure for computing the MLE.

\begin{algorithm}[Fixed-point iteration] \label{alg:fixed.point}
Given data $y_1 = F(x_1), \ldots, y_n = F(x_n)$ and an initial guess $\theta(0) \in \Theta$, define $(\eta(k))_{k \geq 1} \subset \Xi$ and $(\theta(0))_{k \geq 1} \subset \Theta$ by
\begin{equation} \label{eqn:fixed.point.iterate}
\left\{
    \begin{array}{ll}
     \eta(k+1) &:= \sum_{i=1}^n w_i(\theta(k))y_i,\\
      \theta(k+1) &:= \nabla^{(\lambda)} \psi(\eta(k+1)).
    \end{array}
  \right.
\end{equation}
\end{algorithm}

More compactly, we may express \eqref{eqn:fixed.point.iterate} by the following dynamical system on the dual parameter space $\Xi$:
\begin{equation} \label{eqn:iteration}
\eta(k+1) := T(\eta(k)), \quad T(\eta) :=  \sum_{i = 1}^n w_i(\eta(k)) y_i,
\end{equation}
where by an abuse of notations we denote $w_i(\theta) = w_i(\nabla^{(\lambda)} \psi(\eta))$ also by $w_i(\eta)$. It is clear that if $\eta(k)$ already satisfies \eqref{eqn:first.order.condition}, then $\eta(k+1) = \eta(k)$. Since the convex hull of $y_1, \ldots, y_n$ is invariant under $T$, Brouwer's fixed-point theorem implies that a fixed point exists (uniqueness is left for future research).  We give a simple example to illustrate the algorithm.

\begin{figure}[t!]
		\centering
		\includegraphics[scale = 0.55]{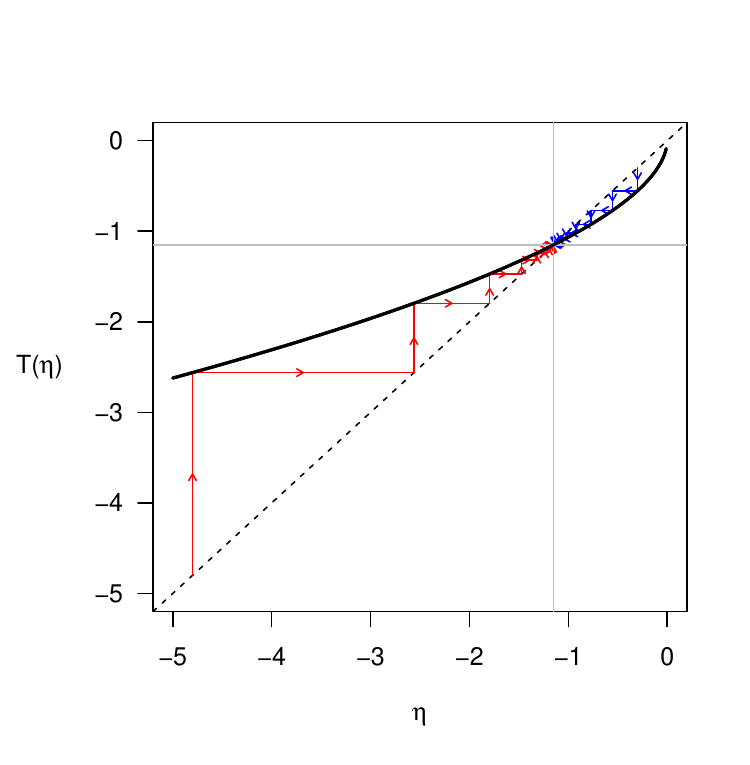}
        \vspace{-0.5cm}
		\caption{Two trajectories $(\eta(k))$ of Algorithm \ref{alg:fixed.point} for the $q$-Gaussian distribution in Example \ref{eg:q.Gaussian}. The solid curve shows the graph of $T(\eta)$ on $\Xi = (-\infty, 0)$ defined by \eqref{eqn:iteration} for the given data-set. The dashed line is the graph of the identity map. The fixed point (indicated by the cross) corresponds to the MLE.}
        \label{fig:qGaussian}
\end{figure}

\begin{example}[$q$-Gaussian distribution] \label{eg:q.Gaussian}
Let $\mathcal{X} = \mathbb{R}$ and $\nu$ be the Lebesgue measure. For $q \in (0, 3)$, the $q$-Gaussian distribution as a scale family can be expressed as a $\lambda$-exponential family with $\lambda = 1 - q \in (-2, 1)$:
\begin{equation} \label{eqn:q.Gaussian}
p(x; \theta) = (1 - \lambda \theta x^2)^{\frac{1}{\lambda}} e^{-\varphi(\theta)}, \quad x \in \mathbb{R},
\end{equation}
where $\theta \in \Theta = (-\infty, 0)$, $F(x) = x^2$ and $\varphi(\theta) = \frac{-1}{2} \log (-\theta) + C_{\lambda}$ for some constant $C_{\lambda}$ (see \cite[Example III.17]{WZ22} for details). The dual parameter is $\eta = \nabla^{(\lambda)} \varphi(\theta) = \frac{1}{2 + \lambda} \frac{1}{\theta} \in \Xi = (-\infty, 0)$, so $\theta = \nabla^{(\lambda)} \psi(\eta) = \frac{1}{2 + \lambda} \frac{1}{\eta}$. In Figure \ref{fig:qGaussian} we illustrate Algorithm \ref{alg:fixed.point} with a simulated data-set with $\lambda = -1.2$ and $n = 500$. The true value of $\theta$ is $-1$ and we show two trajectories with different initial values. In both cases, the iterates converge quickly to the MLE.% Here, there is a unique fixed point (the MLE), and the algorithm converges quickly.
\end{example}

%\begin{example}[Cauchy scale family] \label{eg:Cauchy}
%Let $\mathcal{X} = \mathbb{R}$ and $\nu$ be the Lebesgue measure. The Cauchy scale family $p(x; \sigma) = \frac{\sigma}{\pi(1 + \sigma^2 x^2)}$, $\sigma > 0$, can be expressed as a $\lambda$-exponential family with $\lambda = -1$:
%\[
%p(x; \theta) = \frac{1}{1 - \theta(-x^2)} e^{-\varphi(\theta)}, \quad \varphi(\theta) = -\log \left(\frac{1}{\pi}\sqrt{\theta}\right),
%\]
%where $\theta = \sqrt{\sigma} \in \Theta = (0, \infty)$ and $F(x) = -x^2$. We have $\eta = \nabla^{(-1)} \varphi(\theta) = \frac{-1}{\theta} \in \Xi = (-\infty, 0)$. A direct computation shows that $\psi(\eta) = - \log (2 \pi \sqrt{-\eta})$. In Figure \ref{fig:Cauchy} we illustrate Algorithm \ref{alg:fixed.point} with a simulated data-set with $n = 1000$. The true value of $\theta$ is $1.5$ and the initial value is chosen to be $\theta^{(0)} = 0.3$. Here, there is a unique fixed point (the MLE), and the algorithm converges quickly.
%\end{example}

We are now ready to state the main result of the paper whose proof is a novel application of the $\lambda$-duality. In particular, we will use the $\lambda$-gradients and the strict convexity of $\frac{1}{\lambda} (e^{\lambda \psi} - 1)$. %In future work, we will investigate the uniqueness of the fixed point (and the MLE) as well as the convergence rate of \eqref{eqn:iteration}.

\begin{theorem}[Monotonicity of likelihood] \label{thm:main}
Under Assumption \ref{ass:assumption}, we have $\ell(\hat{\theta}(k+1)) > \ell(\hat{\theta}(k))$ unless $\hat{\theta}(k)$ already satisfies the fixed point condition \eqref{eqn:first.order.condition}.
\end{theorem}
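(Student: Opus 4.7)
The plan is to combine the $\lambda$-Fenchel--Young identity \eqref{eqn:Fenchel.Young} with a classical AM-GM inequality, exploiting the harmonic structure of the weights $w_i(\theta(k))$. Writing $a_i := 1+\lambda\theta(k)\cdot y_i$, $b_i := 1+\lambda\theta(k+1)\cdot y_i$, $\bar a := 1+\lambda\theta(k)\cdot\eta(k+1)$ and $\bar b := 1+\lambda\theta(k+1)\cdot\eta(k+1)$ (all positive since $x_i\in\mathcal{S}$), the expression \eqref{eqn:log.likelihood} gives
\[
\ell(\theta(k+1)) - \ell(\theta(k)) = \frac{1}{\lambda}\sum_{i=1}^n \log(b_i/a_i) - n\bigl(\varphi(\theta(k+1)) - \varphi(\theta(k))\bigr),
\]
so the task reduces to controlling the change in $\varphi$ from above.

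For this, I would apply \eqref{eqn:Fenchel.Young} at the two pairs $(\theta(k+1),\eta(k+1))$ and $(\theta(k),\eta(k+1))$. Because the update sets $\theta(k+1)=\nabla^{(\lambda)}\psi(\eta(k+1))$, the first pair attains equality in \eqref{eqn:Fenchel.Young}; the second pair, however, is \emph{strict} unless $\eta(k+1)=\nabla^{(\lambda)}\varphi(\theta(k))=\eta(k)$, which is precisely the fixed-point condition on $\theta(k)$. Subtracting the two instances of \eqref{eqn:Fenchel.Young} yields $\varphi(\theta(k+1))-\varphi(\theta(k))\leq \frac{1}{\lambda}\log(\bar b/\bar a)$, with strict inequality off the fixed-point set. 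Substituting this into the display above bounds the likelihood gap from below by $\frac{1}{\lambda}\log\bigl(\prod_i r_i/(\bar b/\bar a)^n\bigr)$, where $r_i:=b_i/a_i$.

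The remaining ingredient is a short algebraic identity: $\bar b/\bar a=\frac{1}{n}\sum_i r_i$. Using $w_i(\theta(k))=(1/a_i)/\sum_j(1/a_j)$, one computes $\bar a=\sum_i w_i(\theta(k))\, a_i = n/\sum_j(1/a_j)$ and $\bar b=\sum_i w_i(\theta(k))\, b_i$; the common normalizer cancels in the ratio, leaving the unweighted arithmetic mean of the $r_i$. AM-GM then gives $\prod_i r_i\leq (\bar b/\bar a)^n$, so the bracketed log is nonpositive; since $\lambda<0$, multiplication by $1/\lambda$ flips the sign and produces a nonnegative lower bound on $\ell(\theta(k+1))-\ell(\theta(k))$. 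Combined with the strict Fenchel--Young step, this yields $\ell(\theta(k+1))>\ell(\theta(k))$ whenever $\theta(k)$ is not a fixed point. The main subtlety I anticipate is exactly this ``arithmetic-mean-of-ratios'' identity $\bar b/\bar a=\frac{1}{n}\sum_i r_i$, which depends sensitively on the harmonic form of $w_i$; once it is in hand, the rest is careful sign-tracking through the factor $1/\lambda<0$.
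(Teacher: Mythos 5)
Your proof is correct, and it reaches the conclusion by a visibly different decomposition than the paper, although the two arguments ultimately rest on the same convexity fact. The paper works entirely in the dual variable: it sets $\Psi := e^{\lambda\psi}$, introduces the tangent-plane values $\kappa_i(\eta) := \Psi(\eta) + \nabla\Psi(\eta)\cdot(y_i-\eta)$, identifies $\kappa_i(\eta) = p(x_i;\theta)^{\lambda}$ so that $\ell(\theta) = \frac{1}{\lambda}\log\prod_i\kappa_i(\eta)$, and derives the key inequality from the strict concavity of $\Psi$ between $\eta(k)$ and $\eta(k+1)$, finishing with AM--GM. You instead split $\ell(\theta(k+1))-\ell(\theta(k))$ into a data term and a normalization term, control the latter by the Fenchel--Young gap at the non-dual pair $(\theta(k),\eta(k+1))$, and finish with the same AM--GM step via the identity $\bar b/\bar a = \frac{1}{n}\sum_i r_i$ (which you verify correctly, and which is the same harmonic-weight cancellation the paper uses to pass from \eqref{eqn:concavity.consequence} to $\frac{1}{n}\sum_i \kappa_i(\eta(k+1))/\kappa_i(\eta(k)) < 1$). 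In fact your inequality $\varphi(\theta(k+1))-\varphi(\theta(k)) \le \frac{1}{\lambda}\log(\bar b/\bar a)$ is, after multiplying by $\lambda$ and exponentiating, exactly the concavity inequality $\Psi(\eta(k+1)) \le \Psi(\eta(k)) + \nabla\Psi(\eta(k))\cdot(\eta(k+1)-\eta(k))$, so the two proofs are dual presentations of one mechanism. What your version buys is that it never needs the identification $\kappa_i = p(x_i;\theta)^{\lambda}$ and makes the role of the duality gap (a $\lambda$-divergence) transparent; what the paper's version buys is the clean dual-coordinate formula $\ell = \frac{1}{\lambda}\log\prod_i\kappa_i$, which packages the likelihood itself in terms of the concave function $\Psi$. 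The one step you should justify rather than assert is the \emph{strictness} of the Fenchel--Young inequality at the non-dual pair: the paper states \eqref{eqn:Fenchel.Young} only as an identity at dual pairs, so you need that the supremum defining $\psi(\eta(k+1))$ is attained \emph{only} at $\theta(k+1)=\nabla^{(\lambda)}\psi(\eta(k+1))$; this is equivalent to the strict convexity of $\frac{1}{\lambda}(e^{\lambda\psi}-1)$ guaranteed under Assumption \ref{ass:assumption}, so the gap is easily closed, but it is exactly where the substantive hypothesis of the theorem enters.
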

\begin{proof}
We first express the weights $w_i$ in terms of the dual potential $\psi$. Since $\theta = \nabla^{(\lambda)} \psi(\eta)$, for each $i$ we have
\begin{equation} \label{eqn:proof.identity.1}
1 + \lambda \theta \cdot y_i = 1 + \lambda \frac{\nabla \psi(\eta)}{1 - \lambda \nabla \psi(\eta) \cdot \eta} \cdot y_i = \frac{1 + \lambda \nabla \psi(\eta) \cdot (y_i - \eta)}{1 - \lambda \nabla \psi(\eta) \cdot \eta}.
\end{equation}
Also note the identity
\begin{equation} \label{eqn:proof.identity.2}
1 - \lambda \nabla \psi(\eta) \cdot \eta = \frac{1}{1 + \lambda \theta \cdot \eta} = 1 - \lambda \nabla \varphi(\theta) \cdot \theta,
\end{equation}
which can be verified by a similar computation. Let $\Psi(\eta) := e^{\lambda \psi(\eta)}$ which is positive and (since $\lambda < 0$) strictly concave on $\Xi$. For each $i$, define 
\[
\kappa_i(\eta) := \Psi(\eta) + \nabla \Psi(\eta) \cdot (y_i - \eta).
\]
Using the chain rule, \eqref{eqn:proof.identity.1} and \eqref{eqn:proof.identity.2}, we have
\begin{equation} \label{eqn:proof.identity.3}
1 + \lambda \theta \cdot y_i = \frac{\kappa_i(\eta)}{\Psi(\eta)} (1 + \lambda \theta \cdot \eta) = \frac{\kappa_i(\eta)}{e^{\lambda \psi(\eta) - \log (1 + \lambda \theta \cdot \eta)}} = \kappa_i(\eta) e^{\lambda \varphi(\theta)}.
\end{equation}
Thus
\begin{equation}  \label{eqn:proof.identity.4}
\kappa_i(\eta) = p(x_i; \theta)^{\lambda} \quad \text{and} \quad w_i(\theta) = \frac{(\kappa_i(\eta))^{-1}}{\sum_{j = 1}^n (\kappa_j(\eta))^{-1}}.
\end{equation}
From \eqref{eqn:proof.identity.4}, we observe that
\begin{equation} \label{eqn:likelihood.kappa}
\ell(\theta) =  \frac{1}{\lambda} \log \left( \prod_{i = 1}^n \kappa_i(\eta) \right).
\end{equation}
Since $\lambda < 0$, maximizing the likelihood over $\theta \in \Theta$ is equivalent to minimizing the product $\prod_{i = 1}^n \kappa_i(\eta)$ over $\eta = \nabla^{(\lambda)} \psi(\theta) \in \Xi$.

Now we make the key observation. Consider the $k$-th iterate $(\theta(k), \eta(k))$. For any $\eta \in \Xi$, we have
\begin{equation} \label{eqn:main.observation}
\begin{split}
\sum_{i = 1}^n w_i (\theta(k)) \kappa_i(\eta) &= \sum_{i = 1}^n w_i(\theta(k)) \left( \Psi(\eta) + \nabla \Psi(\eta) \cdot (y_i - \eta) \right) \\
&= \Psi(\eta) + \nabla \Psi(\eta) \cdot \left( \sum_{i = 1}^n w_i(\theta(k)) y_i - \eta \right) \\
&=\Psi(\eta) + \nabla \Psi(\eta) \cdot (\eta(k+1) - \eta),
\end{split}
\end{equation}
where the last equality follows from \eqref{eqn:fixed.point.iterate}. Letting $\eta = \eta(k)$ and $\eta = \eta(k+1)$ gives
\[
\sum_{i = 1}^n w_i(\theta(k)) \kappa_i(\eta(k)) = \Psi(\eta(k)) + \nabla \Psi(\eta(k)) \cdot (\eta(k+1) - \eta(k))
\]
and
\[
\sum_{i = 1}^n w_i (\theta(k)) \kappa_i(\eta(k+1)) = \Psi (\eta(k+1)).
\]
Since $\eta(k) \neq \eta(k+1)$ by assumption, the strict concavity of $\Psi$ implies that
\begin{equation} \label{eqn:concavity.consequence}
\sum_{i = 1}^n w_i(\theta(k)) \kappa_i(\eta(k)) < \sum_{i = 1}^n w_i (\theta(k)) \kappa_i(\eta(k+1)).
\end{equation}
From \eqref{eqn:proof.identity.4}, the last inequality is equivalent to
\begin{equation*}
\sum_{i = 1}^n \frac{(\kappa_i(\eta(k)))^{-1}}{\sum_{j = 1}^n (\kappa_j(\eta(k)))^{-1}} \kappa_i(\eta(k+1)) < \sum_{i = 1}^n \frac{(\kappa_i(\eta(k)))^{-1}}{\sum_{j = 1}^n (\kappa_j(\eta(k)))^{-1}} \kappa_i(\eta(k)),
\end{equation*}
which leads to
\[
\frac{1}{n} \sum_{i = 1}^n \frac{\kappa_i(\eta(k+1))}{\kappa_i(\eta(k))} < 1.
\]
By the inequality of arithmetic and geometric means, we have
\[
\frac{\prod_{i = 1}^n \kappa_i (\eta(k+1))}{\prod_{i = 1}^n \kappa_i (\eta(k))} \leq \left( \frac{1}{n} \sum_{i = 1}^n \frac{\kappa_i(\eta(k+1))}{\kappa_i(\eta(k))} \right)^n < 1.
\]
From \eqref{eqn:likelihood.kappa} (and recalling that $\lambda < 0$), we have $\ell(\theta(k+1)) > \ell(\theta(k))$.
\end{proof}

\section{Dirichlet perturbation}  \label{sec:examples}
The {\it Dirichlet perturbation model} is a multiplicative analogue of the {\it normal location model}\footnote{By this we mean the model $Y = \theta + \epsilon$, where $\theta \in \mathbb{R}^d$ is the unknown mean and $\epsilon \sim N(0, \sigma^2 I)$ is the noise, and $\sigma > 0$ is treated as a nuisance parameter.} and is a key example of the $\lambda$-exponential family \cite{PW18b,WZ22}. Let $\Delta^{d}$ be the open unit simplex in $\mathbb{R}^{d+1}$ defined by
\[
\Delta^d = \left\{ p = (p^0, \ldots, p^d) \in (0, 1)^{d + 1}: p^0 + \cdots + p^d = 1 \right\}.
\]
In this section, we use superscripts to denote the components. On $\Delta^d$, introduce the {\it perturbation operation}
\begin{equation*} \label{eqn:perturbation}
p \oplus q := \left( \frac{p^0q^0}{\sum_{j = 0}^d p^jq^j}, \ldots, \frac{p^dq^d}{\sum_{j = 0}^d p^jq^j} \right),
\end{equation*}
which is the addition under the {\it Aitchison geometry} \cite{EPMB03}. This leads to the {\it difference operation}
\begin{equation*} %\label{eqn:simplex.difference}
p \ominus q := \left(  \frac{p^0/q^0}{\sum_{j = 0}^d p^j/q^j}, \ldots, \frac{p^d/q^d}{\sum_{j = 0}^d p^j/q^j} \right).
\end{equation*}
In particular, for any $p \in \Delta^d$, $p \ominus p = \left( \frac{1}{1 + d}, \ldots, \frac{1}{1 + d}\right)$ is the barycenter of the simplex (uniform distribution). Fix $\sigma > 0$ and let $D = (D^0, \ldots, D^d)$ be a Dirichlet random vector with parameters $\left( \frac{1}{\sigma(1 + d)}, \ldots, \frac{1}{\sigma(1 + d)}\right)$. We may regard $\sigma$ as a noise parameter. Consider the distribution of
\begin{equation} \label{eqn:Dirichlet.perturbation}
Q := p \oplus D,
\end{equation}
where $p \in \Delta^d$ is regarded as the parameter. In \cite[Proposition III.20]{WZ22}, it was shown that for $\sigma > 0$ fixed, the parameterized
distribution of $Q$ on $\mathcal{X} = \Delta^d$ can be expressed as a $d$-dimensional $\lambda$-exponential family with $\lambda = -\sigma < 0$,
\[
\theta := \left( \frac{p^0}{\lambda p^1}, \ldots, \frac{p^0}{\lambda p^d} \right) \quad \text{and} \quad F(q) := \left( \frac{q^1}{q^0}, \ldots, \frac{q^d}{q^0} \right).
\]
The potential function is $\varphi(\theta)= \frac{1}{\lambda (1 + d)} \sum_{i = 1}^d \log (-\theta^i)$, defined for $\theta \in \Theta := (-\infty, 0)^d$. The dual parameter is then given by
\begin{equation} \label{eqn:Dirichlet.eta}
\eta := \nabla^{(\lambda)} \varphi(\theta) = \frac{1}{\lambda} \left( \frac{1}{\theta^1}, \ldots, \frac{1}{\theta^d} \right) = \left( \frac{p^1}{p^0}, \ldots, \frac{p^d}{p^0} \right) \in \Xi := (0, \infty)^d,
\end{equation}
which is independent of $\sigma$ or $\lambda$. In particular, we may recover $p$ from $\eta$ by
\begin{equation} \label{eqn:Dirichlet.eta.to.p}
p = (p^0, p^1, \ldots, p^d) = \left( \frac{1}{1 + \sum_{j = 1}^d \eta^j}, \frac{\eta^1}{1 + \sum_{j = 1}^d \eta^j}, \ldots, \frac{\eta^d}{1 + \sum_{j = 1}^d \eta^j} \right).
\end{equation}
With \eqref{eqn:Dirichlet.eta.to.p}, we can express the update \eqref{eqn:iteration} in terms of the {\it simplex parameter} $p(k)$ rather than the dual parameter $\eta(k)$.

In the following proposition, we show that the update for the Dirichlet perturbation is, in fact, independent of $\lambda$. That is, the algorithm works the same way even if $\sigma = -\lambda$ is {\it unknown}. This is analogous to maximum likelihood estimation of the normal location model, where the sample mean does not depend on the value of the noise. This is not the case in general.

\begin{proposition}
Let $q_1, \ldots, q_n \in \Delta^d$ be $n$ i.i.d.~samples from the Dirichlet perturbation model \eqref{eqn:Dirichlet.perturbation}. Under the algorithm \eqref{eqn:iteration}, we have
\begin{equation} \label{eqn:Dirichlet.p.update}
p(k+1) = p(k) \oplus \left( \frac{1}{n} \sum_{i = 1}^n (q_i \ominus p(k)) \right).
\end{equation}
We emphasize that $\frac{1}{n} \sum_{i = 1}^n$ is the Euclidean (not Aitchison) average on $\Delta^d$.
\end{proposition}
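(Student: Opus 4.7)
The plan is a direct computation: substitute the Dirichlet perturbation parametrization into the dual-parameter update $\eta(k+1) = \sum_i w_i(\theta(k)) F(q_i)$ from \eqref{eqn:iteration}, then translate back to the simplex parameter $p(k+1)$ via \eqref{eqn:Dirichlet.eta.to.p}. The key observation is that both the weights $w_i$ and the sufficient statistic $F(q_i)$ decompose naturally in terms of the Aitchison difference $q_i \ominus p(k)$, so the entire update collapses into a single $\oplus$-and-average formula.

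First, writing $p = p(k)$ and using $\lambda \theta^j = p^0/p^j$ together with $F(q_i)^j = q_i^j/q_i^0$, a short calculation gives
\[
1 + \lambda \theta(k) \cdot F(q_i) = \frac{p^0 \sum_{j=0}^d q_i^j/p^j}{q_i^0} = \frac{1}{(q_i \ominus p(k))^0},
\]
so by \eqref{eqn:weight} the weights become $w_i(\theta(k)) = (q_i \ominus p(k))^0 / \sum_{j=1}^n (q_j \ominus p(k))^0$. A parallel manipulation yields the identity $(q_i \ominus p(k))^0 \cdot q_i^l/q_i^0 = (p(k)^l/p(k)^0)\,(q_i \ominus p(k))^l$ for $l = 1, \ldots, d$. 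Letting $r_i := q_i \ominus p(k)$ and $\bar r := \frac{1}{n}\sum_i r_i$, these two identities combine to
\[
\eta(k+1)^l = \sum_{i=1}^n w_i\, \frac{q_i^l}{q_i^0} = \frac{p(k)^l}{p(k)^0} \cdot \frac{\bar r^l}{\bar r^0}, \quad l = 1, \ldots, d.
\]

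I would then invert via \eqref{eqn:Dirichlet.eta.to.p}. Using $p^0 \bar r^0 + \sum_{l=1}^d p^l \bar r^l = \sum_{l=0}^d p^l \bar r^l$ gives $1 + \sum_{l=1}^d \eta(k+1)^l = \sum_{l=0}^d p(k)^l \bar r^l \,/\, (p(k)^0 \bar r^0)$, and hence
\[
p(k+1)^l = \frac{p(k)^l \bar r^l}{\sum_{m=0}^d p(k)^m \bar r^m}, \quad l = 0, 1, \ldots, d,
\]
which is exactly the definition of $p(k) \oplus \bar r$. The main obstacle is purely bookkeeping with the distinguished coordinate $q^0$, since $F$ and \eqref{eqn:Dirichlet.eta.to.p} break the natural symmetry of the Aitchison operations even though the final answer $p(k) \oplus \bar r$ is manifestly symmetric; a useful consistency check is that the denominator $\sum_m p^m \bar r^m$ equals the symmetric expression $\frac{1}{n}\sum_i 1/\sum_j (q_i^j/p^j)$ upon using $\sum_m q_i^m = 1$, which also explains why the Euclidean (rather than Aitchison) average of the $r_i$ is the right object here.
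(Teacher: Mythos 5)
Your computation is correct and follows exactly the route the paper indicates (substituting \eqref{eqn:Dirichlet.eta} and \eqref{eqn:Dirichlet.eta.to.p} into \eqref{eqn:iteration} and simplifying via the simplex operations); the identities $1+\lambda\theta(k)\cdot F(q_i) = 1/(q_i\ominus p(k))^0$ and $(q_i\ominus p(k))^0\, q_i^l/q_i^0 = (p(k)^l/p(k)^0)(q_i\ominus p(k))^l$ all check out, and the inversion step correctly yields $p(k)\oplus\bar r$. In fact you supply the details that the paper omits ``due to space constraints,'' so your write-up is a complete version of the paper's own argument.
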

\begin{proof}
We plug \eqref{eqn:Dirichlet.eta} and \eqref{eqn:Dirichlet.eta.to.p} into \eqref{eqn:iteration} and simplify using the simplex operations. The details are omitted due to space constraints.
\end{proof}

%{\color {blue}
%\begin{proof}
%For each $i$, we have
%\begin{equation}
%\begin{split}
%1 + \lambda \theta(k) \cdot y_i &= 1 + \lambda \sum_{j = 1}^d \frac{p^0(k)}{\lambda p^j(k)} \frac{q_i^j}{q_i^0} = \frac{p^0(k)}{q_i^0} \sum_{j = 0}^d \frac{q_i^j}{p^j(k)}.
%\end{split}
%\end{equation}
%It follows that
%\[
%w_i(\eta(k)) = \frac{\frac{1}{1 + \lambda \theta(k) \cdot y_i}}{\sum_{\ell = 1}^n \frac{1}{1 + \lambda \theta(k) \cdot y_{\ell}}} = \frac{\frac{q_i^0/p^0(k)}{\sum_{j = 0}^d q_i^j/p^j(k)}}{\sum_{\ell = 1}^n  \frac{Q_{\ell}^0/p^0(k)}{\sum_{j = 1}^d q_{\ell}^j/p^j(k)} } = \frac{\frac{q_i^0}{q_i \cdot \frac{1}{p(k)}}}{\sum_{\ell = 1}^n \frac{q_{\ell}}{q_{\ell} \cdot \frac{1}{p(k)}}}.
%\]
%Then compute $\eta(k+1)$ and use \eqref{eqn:Dirichlet.eta.to.p} to get $p(k+1)$.

%\end{proof}
%}

\begin{figure}[t!]
		\centering
		\includegraphics[scale = 0.48]{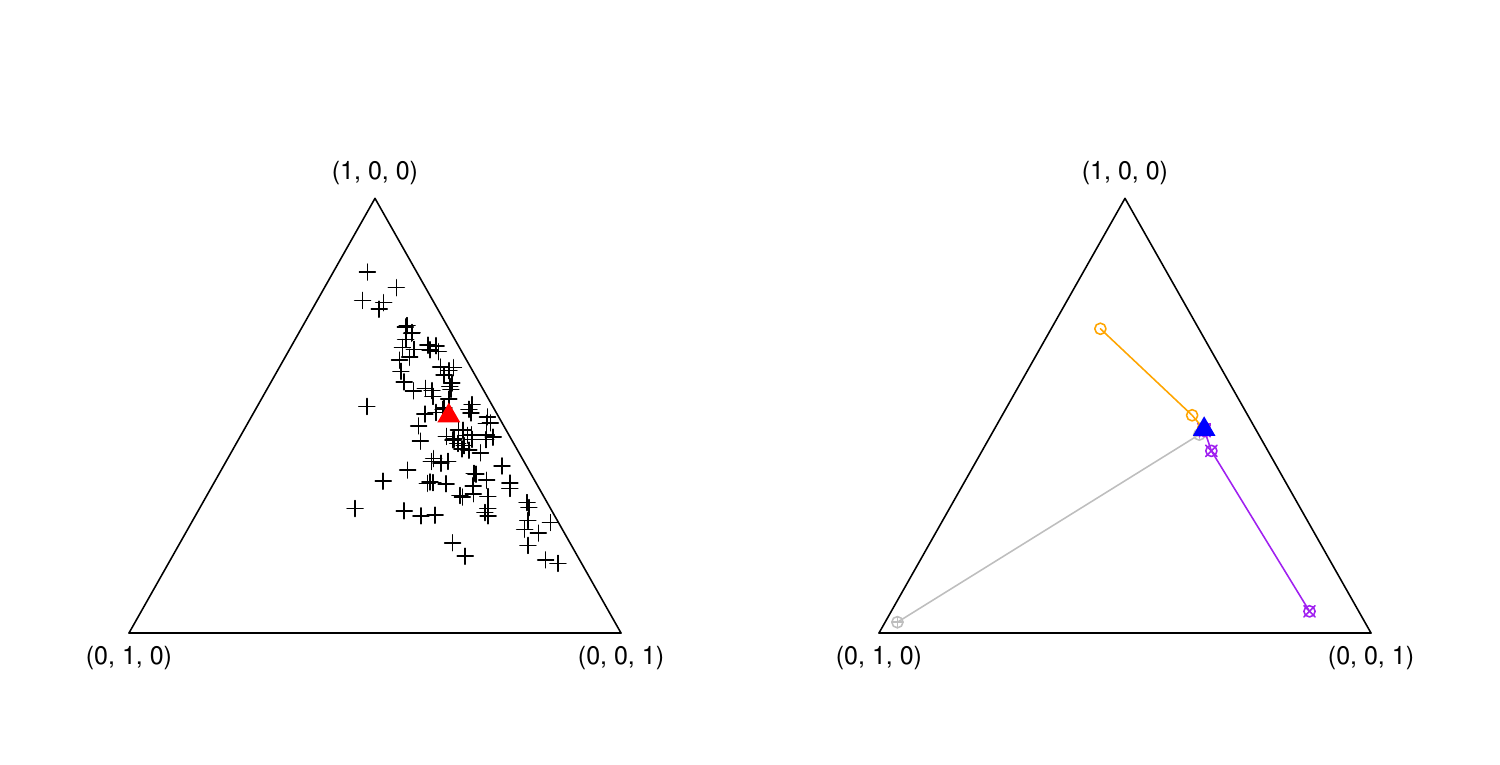}
        \vspace{-1.2cm}
		\caption{{\bf Left:} True $p$ (red {\color {red} $\blacktriangle$}) and samples ($+$) from the Dirichlet perturbation model. {\bf Right:} Three trajectories $(p(k))_{k \geq 0}$ of the algorithm \eqref{eqn:Dirichlet.p.update} with different initial values. They all converge quickly to the MLE (blue {\color {blue} $\blacktriangle$}).}
        \label{fig:Dir}
\end{figure}

%\vspace{-0.3cm}

\begin{example} \label{eg:Dir}
We consider a simulated data-set from the Dirichlet perturbation model with $d = 2$, $p = (0.1, 0.4, 0.5)$, $\sigma = 0.1$ and $n = 100$ (chosen for visualization purposes; the algorithm works for much larger values of $d$ and $n$). In Figure \ref{fig:Dir} we plot the data and the output of the algorithm \eqref{eqn:Dirichlet.p.update} with several initial values. In all cases, the iterates converge quickly to the MLE. In practice, we may initialize $p(0)$ by the sample mean $\frac{1}{n} \sum_{i = 1}^n q_i$ as it tends to be close to the MLE.
\end{example}

\begin{credits}
\subsubsection{\ackname} The research of T.-K.~L.~Wong is partially supported by an NSERC Discovery Grant (RGPIN-2019-04419).

\subsubsection{\discintname}
The authors have no competing interests to declare that are
relevant to the content of this article.
\end{credits}
%
% ---- Bibliography ----
%
% BibTeX users should specify bibliography style 'splncs04'.
% References will then be sorted and formatted in the correct style.
%
\bibliographystyle{splncs04}
\bibliography{references}

\end{document}